\newtheorem{theorem}{Theorem}[section]
\newtheorem{proposition}[theorem]{Proposition}
\theoremstyle{definition}
\newtheorem{definition}[theorem]{Definition}
\theoremstyle{remark}
\numberwithin{equation}{section}
\newcommand{\T}{\mathcal{T}}
\newcommand{\A}{\mathbf{A}}
\newcommand{\B}{\mathbf{B}}
\newcommand{\Op}{\mathcal{O}}
\newcommand{\W}{\mathcal{W}}
\newcommand{\C}{\mathbf{C}}
\newcommand{\Z}{\mathbf{Z}}
\newcommand{\G}{\widetilde{G}}
\newcommand{\F}{\acute{F}}
\newcommand{\lb}{\left(}
\newcommand{\rb}{\right)}
\begin{document}

\title{\textbf{A Bijective Proof for Reciprocity Theorem}}
\author{ShinnYih Huang, Alexander Postnikov}

\begin{abstract}
In this paper, we study the graph polynomial that records spanning rooted forests $f_G$ of a given graph. This polynomial has a remarkable reciprocity property. We give a new bijective proof for this theorem which has Pr\"ufer coding as a special case.
\end{abstract}
\maketitle
\section{\textbf{Introduction}}
A spanning tree $T$ in some graph $G$ is a connected acyclic subgraph of $G$ that includes all vertices in $V(G)$. Calculating the number $t( G ) $ of spanning trees for some graph $G$ is one of the typical questions we will ask. For example, when $G$ is a complete graph $K_n$, $t( K_n )  = n^{n-2}$. There are several methods to calculate $t( G ) $, such as the matrix-tree theorem and Pr\"ufer coding.

In this paper, we study some graph polynomial $f_G$ that records the spanning trees of the extended graph $\G$ of graph $G$. This polynomial can be used to compute the spanning tree of some complex graphs easily. For example, let $\Gamma = \Gamma \lb G;G_1,\ldots,G_k \rb $ be the graph that is obtained by substitution of graphs $G_1,\ldots,G_k$ instead of a vertices of a graph $G$. Then we can easily obtain $f_{\Gamma}$ by $f_G$ and $f_{G_i}$, for $1\leq i\leq k$.

In fact, the polynomial $f_G$ possess the remarkable property of reciprocity. A. Renyi [9] gives an inductive proof for this reciprocity theorem. I. Pak and A. Postnikov [1] also give an inductive proof. Throughout this paper, we present a new bijective proof for the reciprocity theorem. One interesting fact is that the map we used in the bijection is Pr\"ufer coding when $G$ is a complete graph.

This paper is organized as follows: In section 2, we define the graph polynomial $f_G$ to enumerate spanning trees in $\G$. In section 3, we show the reciprocity theorem for $f_G$ and defined some tools for the future bijective proof. In section 4, we define two maps $\phi$ and $\psi$ to show the bijection between $\A$ and $\B$. Finally, in section 5, we use this bijective coorespondence to prove the reciprocity theorem of $f_G$.

\section{\textbf{Graph Polynomials for Spanning Trees}}
Suppose that $G = ( V,E ) $ is a graph with vertices $1,\ldots,n$, where $|V| = n$. Let $0 \notin V$ and $\widetilde{V} := V \cup \{ 0 \}$. We say the \textit{extended graph} $\widetilde{G}$ of $G$ is a graph on the set $\widetilde{V}$ obtained by adding edges $\{0,v \}$ to $G$ for all vertices $v \in V$. Clearly, if $G$ is a complete graph $K_n$ with $n$ vertices, then $\widetilde{G}$ is a complete graph $K_{n+1}$ with $n+1$ vertices. We denote the set of all \textit{spanning trees} in $G$ as $\T_{G}$, i.e. all acyclic connected subgraphs in $G$ which contain all the vertices of $G$.

First of all, we assign variables $x_i$ to $i$, for all $1 \leq i \leq n$. For any spanning tree $T$ in $\T_{G}$, define a function $m( T ) $ associated to $T$:
\begin{equation}\label{etreep}
m( T )  = \prod_{v \in V} x_v^{\rho_{T}( v ) -1},
\end{equation}
where $\rho_{T}( v ) $ denotes \textit{degree} of the vertex $v$ in the tree $T$, i.e. the number of edges adjacent to the vertex $v$. 

Now, we set the graph polynomial $t_G$ to be,
$$t_G := \sum_{T\in \T( G ) }m( T ) .$$ 
Let us associate the variable $x$ to vertex $0$. Then, the graph polynomial $f_G$ of variables $x$ and $x_v$, for all $v \in V$ is defined as follows: 
\begin{equation}\label{egraphpf}
f_G := t_{\G} = \sum_{T\in \T_{\G}}m( T ) .
\end{equation}
We denote $V = \{1,\ldots,n \}$ and $f_G = f_G( x;x_1,\ldots,x_n ) $.

It is easy to see that the spanning trees in $\T_{\G}$ correspond to \textit{spanning rooted forests} in $G$, i.e. acyclic subgraphs in $G$ containing all vertices in $V$, with a root chosen in each component. In particular, the two polynomials $t_G$ and $f_G$ possess the following identity:
\begin{align}\label{etwographp}
t_G( x_1,\ldots,x_n )  \cdotp ( x_1+\cdots+x_n )  =  f_G( 0;x_1,\ldots,x_n ) .
\end{align}
An short proof for Eq.\eqref{etwographp} is provided in Igor Pak and A. Postnikov [1].

The graph polynomial $f_G$ has two important properties that allow us to compute the number of spanning rooted forests for certain graph. The first property is the composition of graphs. Let $G_1$ and $G_2$ be two graphs on disjoint sets of vertices, and $G_1 + G_2$ be the disjoint union of the graphs. We associate variable $x$ to the root $0$, variables $y_1,\ldots,y_{r_1}$ to the vertices of $G_1$, and variables $z_1,\ldots,z_{r_2}$ to the vertices of $G_2$. Then the following formula holds:
$$f_{G_1+G_2}( x;y_1\ldots,y_{r_1},z_1\ldots,z_{r_2} )  = x \cdotp f_{G_1}( x;y_1\ldots,y_{r_1} ) \cdotp f_{G_2}( x;z_1\ldots,z_{r_2} ) .$$
One can prove the above equation by some simple arguments. 

\section{\textbf{Reciprocity Theorem For Polynomials $f_G$} }
A graph $\overline{G} = ( V,\overline{E} ) $ is called the \textit{compliment} of some graph $G = ( V,E ) $ if $\overline{E} = \binom{V}{2}\backslash E$. That is to say, $e \in \overline{E}$ iff $e \notin E$. The graph polynomials $f_G$ possess the following \textit{reciprocity property}:
\begin{equation}\label{erecp}
f_G{( x;x_1,\ldots,x_n ) } = ( -1 ) ^{n-1} \cdotp f_{\overline{G}}( {-x-x_1-\cdots-x_n;x_1,\ldots,x_n} ) .
\end{equation}
The case that $x_1 = \cdots = x_n = 1$ for \eqref{erecp} was found by S. D.
Bedrosian [2] and A. Kelmans.

Before we give the bijective proof for Eq.\eqref{erecp}, we first introduce some notation.

First of all, let $\F_{G}$ be a spanning tree of some extended graph $\G$ with root $0$ and vertices $1,\ldots,n$ so that $F_{G}$ is a spanning rooted forest of $G$. It is easy to show that for any vertex $u$ of $G$, there is a unique path from $u$ to root $0$. Therefore, we can assign a direction to every edge in $\F_{G}$ such that each arrow points toward the root $0$. This implies that every vertex $u \neq 0$ has outdegree 1. For convention, in this paper, when we say graphs $\F_G \in \T_{\G}$ or $F_G$, we always consider it as a directed graph, and thus for every $u \neq 0$, there is a unique directed edge $(u,v) \in E(\F_G)$. In addition, a vertex $u$ is the \textit{child} of vertex $u_1$ if there is a directed path from $u$ to $u_1$ in $\T_{\G}$.

Secondly, we say that a \textit{valid pair} of some tree $\F_{K_n}$ is a pair $( u,v )  \in F_{K_n}$, and $\Z_{G,\F_{K_n}}$ is a subset of valid pairs of $\F_{K_n}$ such that
\begin{equation}\label{evalidpairs}
\Z_{G,\F_{K_n}} = \{( u,v ) : ( u,v )  \notin E(\overline{G}), ( u,v )  \in  E(F_{K_n}) \}  .
\end{equation}
Now, given a subset $\C$ of all valid pairs not in $\Z_{G,\F_{K_n}}$, we define an \textit{operational set} $\Op_{G,\F_{K_n},\C}$ as follows:
\begin{equation}\label{eopers}
\Op_{G,\F_{K_n},C} = \C \cup \Z_{G,\F_{K_n}}.
\end{equation}
One can see that for a spanning tree $\F_{K_n}$ and graph $G \in K_n$, there could be many possible operational sets. An example is in figure \ref{fig:1}.
\begin{figure}[h]
\centering
\includegraphics[width=9.5cm]{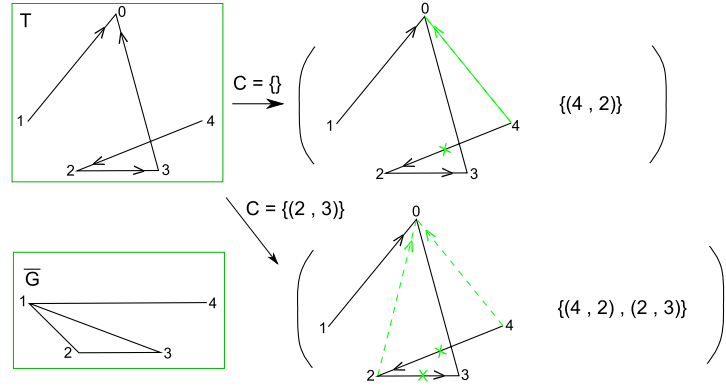}
\caption{For $\F_{K_n}$ and $\overline{G}$ as above, we have two possible operational sets for $\F_{K_n}$. (The green marks are the graph after we apply all the pair in the operation sets to $\F_{K_n}$.) }
\label{fig:1}
\end{figure}

Now, for any $\F_{\overline{G}}$, suppose its induced subgraph $F_{\overline{G}}$ in $K_n$ has $k$ connected components. We say a \textit{weight sequence} $\W_{\F_{\overline{G}}}$ of $\F_{\overline{G}}$ is
\begin{equation}\label{eweight}
\W_{\F_{\overline{G}}} = ( w_1,\ldots,w_{k-1}  ), 
\end{equation}
where $w_j \in \{0,1,\ldots,n\}, \text{ for } 1 \leq j \leq k-1$. By convention, if $k=1$, we set $W_{T_{\widetilde{\overline{G}}}}$ to be empty. Therefore, there are $(n+1)^{k-1}$ possible weight sequences for spanning tree $\F_{\overline{G}}$ that has $k$ connected compoenents in $F_{\overline{G}}$.

Given a graph $G\in K_n$, let $\A$ be the set of all possible pairs $\lb \F_{K_n}, \Op_{G,\F_{K_n},C} \rb $ and $\B$ be the set of all possible pairs $\lb \F_{\overline{G}},\W_{\F_{\overline{G}}} \rb $. In the following section, we show a bijection between $\A$ and $\B$.

\section{\textbf{Bijection Between $\A$ to $\B$}}
\noindent
Suppose that $G$ is a graph with $n$ vertices labeled $1,\ldots,n$ where each vertex $i$ is associated to a variable $x_i$, for $1\leq i\leq n$. For the root in the extended graph, we assign variable $x$ to root $0$. We first construct a map $\phi$ from $\A$ to $\B$.

\begin{definition} Given a pair $\left( \F_{K_n}, \Op_{G,\F_{K_n},C} \right)  \in A$, the map $\phi$ outputs a pair $(\F,\W) $ and is defined as follows:
\vspace{5pt}

Let $S$ be the set of vertices $u$ in $\F_{K_n}$, where the directed edge $(u,v) \in E(\F_{K_n})$ is a pair in $\Op_{G,\F_{K_n},C}$ or $v = 0$. Construct an empty sequence $\W$ and a graph $\F$ which is a duplicate of $\F_{K_n}$.

\vspace{5pt}

WHILE $|S| > 1$,

\begin{description}
\item[1] Suppose there is a leaf $u' \neq 0$ in $\F_{K_n}$ such that the edge $(u',v') \in E(\F_{K_n})$ is not in $S$. We remove $u'$ and  $( u',v' ) $ from $\F_{K_n}$.

\item[2] Repeat step 1 until every leaf $u \neq 0$ in $\F_{K_n}$ is also in $S$. Let $M$ to be the set of all these vertices.

\item[3] Delete the largest vertex $u^{*}$ in $M$ and the directed edge $( u^{*},v^{*} ) $ in $\F_{K_n}$. We set $S$ to be $S \backslash \{u^{*} \}$, and add $v^{*}$ to the end of the sequence $\W$.

\item[4] Remove edge $( u^{*},v^{*} ) $ and add edge $( u^{*},0)$ to $\F$.
\end{description}
\vspace{5pt}

RETURN $(  \F,\W   ) $.
\vspace{5pt}
\end{definition}

\noindent
An example of this algorithm is in figure \ref{fig:2}. In the following proposition, we prove that $\phi$ is well-defined.
\begin{figure}[h]
\centering
\includegraphics[width=9.5cm]{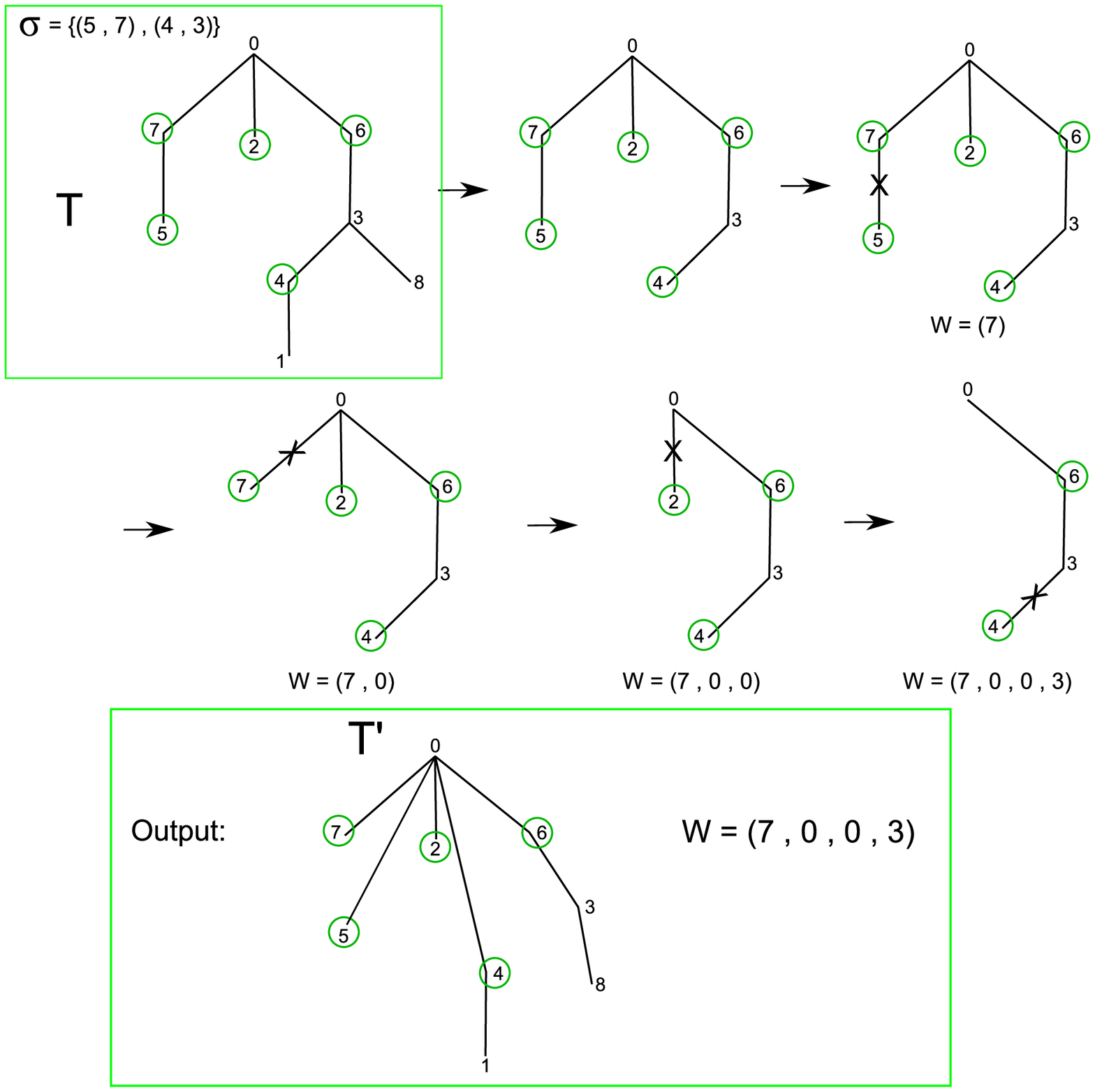}
\caption{Input: $T = \F_{K_n}$ and $\Op = \Op_{G,\F_{K_n},C} = \{( 5,7 ) ,( 2,3 )  \}$, Output: $T' = \F_{\overline{G}}$ and $\W = \W_{\F_{\overline{G}}} = \{7,0,0,3 \}$}
\label{fig:2}
\end{figure}

\begin{proposition}
The map $\phi$ is a well-defined map from $\A$ to $\B$.
\end{proposition}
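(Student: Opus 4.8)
The plan is to verify the three conditions that ``well-defined'' demands: that the algorithm makes no unrecorded choices, that it terminates, and that the returned pair $(\F,\W)$ really lies in $\B$. The only place a choice appears is the leaf-pruning of Steps 1--2, so first I would check that the order in which the leaves $u'\notin S$ are deleted is irrelevant. Repeatedly deleting leaves not in $S$ from $\F_{K_n}$ always halts at the same subtree: a vertex survives the pruning if and only if it lies in $S$ or has a descendant in $S$. Hence the set $M$ of remaining leaves, and therefore the largest element $u^{*}\in M$ selected in Step 3, are uniquely determined, so $\phi$ is a genuine function. Termination is then immediate, since Step 3 deletes one element of $S$ on each pass, so $|S|$ strictly decreases and the WHILE loop runs exactly $|S|-1$ times.

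Next I would show that $\F$ is a spanning tree of $\widetilde{\overline{G}}$ rooted at $0$, i.e.\ a spanning rooted forest of $\overline{G}$. Throughout the loop $\F$ stays a spanning tree of $K_{n+1}$ rooted at $0$: cutting the edge $(u^{*},v^{*})$ detaches the subtree below $u^{*}$, and the new edge $(u^{*},0)$ reattaches it to the root, which preserves connectivity, acyclicity, and the total of $n$ edges. It remains to control which edges survive. By construction every element of $S$ is either a \emph{root} (its outgoing edge points to $0$) or the tail of an $\Op$-edge, and the two kinds are disjoint because each vertex has a unique outgoing edge. I then need the key claim: when the loop halts, the single surviving vertex of $S$ is a root, so that every $\Op$-edge has been cut.

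This structural claim is the main obstacle. I would derive it from the fact that the processing is forced to proceed from the bottom of the tree upward: a vertex of $S$ can be chosen in Step 3 only after all of its descendants lying in $S$ have already been deleted, since any such descendant prevents it from being a leaf of the pruned tree. Now every $\Op$-tail $u$ sits in a component of $F_{K_n}$ whose root $r$ is a proper ancestor of $u$ with $r\in S$; hence $u$ is always deleted strictly before $r$, and an $\Op$-tail can therefore never be the last survivor. Consequently the survivor is a root, every $\Op$-edge (all of $\Z_{G,\F_{K_n}}$ together with the chosen $\overline{G}$-edges in $\C$) is cut, and the only non-root edges left in $\F$ are the valid pairs outside $\Op$, which are edges of $\overline{G}$. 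Thus $\F\in\T_{\widetilde{\overline{G}}}$.

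Finally I would check that $\W$ is an admissible weight sequence. Writing $c$ for the number of components of $F_{K_n}$, we have $|S|=c+|\Op|$, so the loop performs $|S|-1$ passes and records $|S|-1$ entries in $\W$. Since each of the $|\Op|$ cuts creates one additional tree attached to $0$, the forest $F_{\overline{G}}$ underlying $\F$ has $c+|\Op|=|S|$ components; calling this number $k$, the sequence $\W$ has length $k-1$. As every recorded value $v^{*}$ is a vertex in $\{0,1,\ldots,n\}$, the sequence $\W$ is a weight sequence for $\F$ in the sense of \eqref{eweight}, whence $(\F,\W)\in\B$. Assembling these steps shows that $\phi\colon\A\to\B$ is well defined.
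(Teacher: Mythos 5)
Your proof is correct and follows essentially the same outline as the paper's: show $\F$ remains a spanning tree of $\widetilde{K_n}$ under each cut-and-reattach, that all $\Op$-edges end up replaced by edges to $0$ so the surviving edges lie in $\overline{G}$, and that the loop records exactly $|S|-1=k-1$ entries. You are in fact more careful than the paper at the one delicate point (that the bottom-up processing forces every $\Op$-tail to be deleted before its component root, so the last survivor of $S$ is a root and every $\Op$-edge really does get cut), which the paper passes over with ``it is easy to see that all the steps in the WHILE loop work.''
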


\begin{proof}
It is easy to see that all the steps in WHILE loop work. Now, we show that $\F$ is a spanning tree of $\widetilde{K_n}$ after each step 4. We proceed this by induction.

 Initially, $\F = \F_{K_n}$ is a tree. Suppose that at some step 4, we delete edge $( u^{*},v^{*} )$ and add edge $( u^{*},0 ) $ to the spanning tree $\F \in \T_{\widetilde{K_n}}$. Furthermore, since for any vertex $u\neq 0$, $u$ and root $0$ is connected in  graph $\F$, it remains connected after we change some edge $( u^{*},v^{*} )$ to edge $( u^{*},0 ) $. Since $|E(\F)| = n$, $\F$ is always a spanning tree of $\widetilde{K_n}$ after any step 4.

Now, from \eqref{eopers}, we know that $\Z_{G,\F_{K_n}} \in \Op_{G,\F_{K_n},C}$ and all the edges $( u,v )$ in the operational set $\Op_{G,\F_{K_n},C}$ became $( u,0 )$ in the output graph $\F$. Thus, every edge in $E(F)$ is also in $E(\overline{G})$, and $\F$ is a spanning tree of $\widetilde{\overline{G}}$.

Finally, we show that $W$ is a weight sequence of $\F$. Clearly, $S$ is the set of all roots in the spanning rooted forest $F$. Since the WHILE loop ends when $|S| = 1$, there are totally $|S| -1 $ elements added to the sequence $W$. Consequently, $W$ satisfies the length requirement in Eq.\eqref{eweight}.

The above arguments tell us that $(  \F,\W   ) \in B$ as desired.
\end{proof}
\noindent
We now give a map $\psi$ from $\B$ to $\A$.

\begin{definition} Given a pair $\left( \F_{\overline{G}},\W_{\F_{\overline{G}}} \right) \in \B$, the map $\psi$ outputs $\left( \F^{*}, \Op \right) $ and is defined as follows: 

Assume that the forest $F_{\overline{G}}$ has $k$ connected components and the associated weight sequence $\W_{\F_{\overline{G}}} = ( w_1,\ldots,w_{k-1}  )$. Create a tree $\F^{*} = \F_{\overline{G}}$, sequence $\W_{\F^{*}} = \W_{\F_{\overline{G}}}$, and an empty set $\Op$. Let $R$ be the set of roots in  $F_{\overline{G}}$.

\vspace{5pt}

WHILE the length of $\W_{\F^{*}}$ is larger than 0.

\begin{description}
\item[1] We choose the first element $w$ in the sequence $\W_{\F^{*}}$. Let $u$ be the largest vertex in $R$ such that $w_i$ is not $u$ nor a child of $u$ in $\F^{*}$, for any $w_i$ in $\W_{\F^{*}}$. Delete the element $w$ from the sequence $\W_{\F^{*}}$ and $u$ from the set $R$.

\item[2] Remove the edge $( u,0 ) $ and add the edge $( u,w ) $ to the graph $\F^{*}$. If $w \neq 0$, we add pair $( u,w ) $ to the set $\Op$, i.e. $\Op = \Op \cup \{( u,w ) \}$.
\end{description}
\vspace{5pt} 

RETURN $( \F^{*},\Op  )  $.
\vspace{5pt} 
\end{definition}

An example of this mapping $\psi$ is in figure \ref{fig:3}. In the following lemma, we prove that $\psi$ is well-defined.

\begin{figure}[h]
\centering
\includegraphics[width=9.5cm]{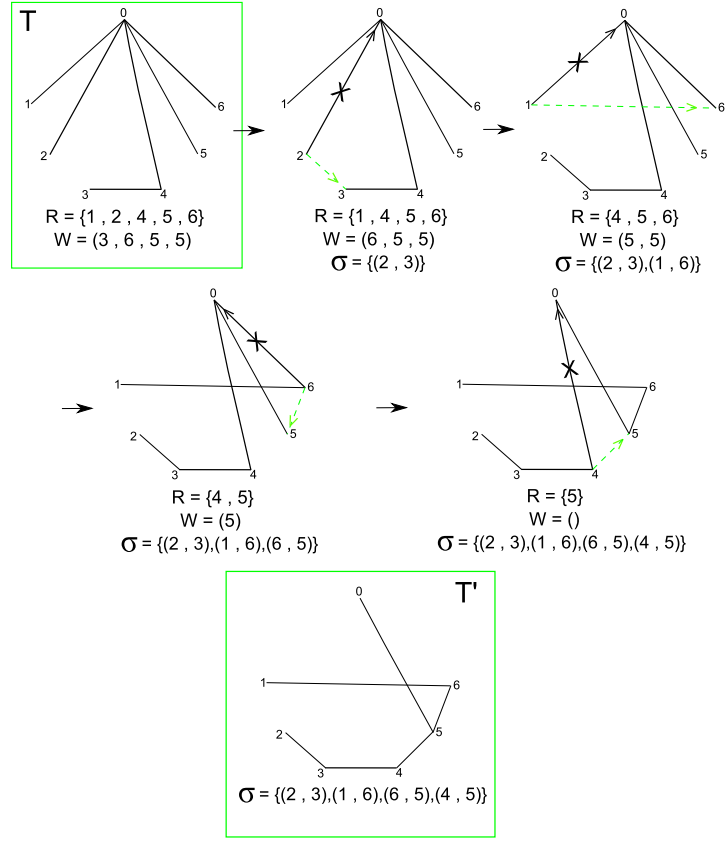}
\caption{Input: $T = \F_{\overline{G}}$ and $W = \W_{\F_{\overline{G}}}  = \{3,6,5,5 \}$, Output: $T'=\F^{*}$ and operational set $\sigma = \Op = \{( 2,3 ) ,( 1,6 ) ,( 6,5 ) ,( 4,5 )  \}$. (R is the set of current roots.) }
\label{fig:3}
\end{figure}

\begin{proposition}
The map $\psi$ is a well-defined map from $\B$ to $\A$.
\end{proposition}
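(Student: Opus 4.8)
The plan is to verify three things in turn: that every iteration of the WHILE loop can actually be carried out (so that $\psi$ really produces an output), that the returned graph $\F^{*}$ is a spanning tree of $\widetilde{K_n}$, and that the returned set $\Op$ is a genuine operational set for $\F^{*}$; together these say $(\F^{*},\Op)\in\A$. I would begin by recording the loop invariant that the number of active roots always exceeds the number of remaining weights by exactly one. Initially $R$ has $k$ elements (the components of $F_{\overline{G}}$) and $\W_{\F^{*}}$ has length $k-1$; since each pass through the loop deletes one vertex from $R$ and one entry from $\W_{\F^{*}}$, the relation $|R| = (\text{length of }\W_{\F^{*}})+1$ is preserved. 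In particular the loop terminates after exactly $k-1$ iterations, when $|R|=1$.

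The heart of the argument, and the step I expect to be the main obstacle, is to prove that the vertex $u$ demanded in step 1 always exists, i.e.\ that some active root is not blocked by any remaining weight. Here I would argue by a counting (pigeonhole) estimate. View $\F^{*}$ as a rooted forest on $V$ by ignoring the edges to $0$; its components are exactly the subtrees hanging from the vertices that still point to $0$, and each remaining weight $w_i\neq 0$ lies in a unique such subtree, while $w_i=0$ lies in none. Call an active root $u\in R$ \emph{blocked} if some remaining $w_i$ equals $u$ or is a descendant (``child'') of $u$. Distinct blocked active roots have disjoint subtrees, so each is witnessed by a distinct remaining weight; hence the number of blocked active roots is at most the length of $\W_{\F^{*}}$, which by the invariant equals $|R|-1 < |R|$. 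Therefore at least one active root is unblocked, and the largest such root $u$ is well-defined. This is the crucial point, because it also guarantees (taking $w_i=w$, the entry being processed) that $w$ itself is neither $u$ nor a descendant of $u$, so $w$ lies outside the subtree of $u$.

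With existence in hand, the second step is a routine induction on the edge swaps. I would show that $\F^{*}$ stays a spanning tree of $\widetilde{K_n}$ after each step 2: removing $(u,0)$ splits $\F^{*}$ into the subtree rooted at $u$ and the component containing $0$, and since $w$ lies outside $u$'s subtree (by the previous paragraph) adding $(u,w)$ reconnects the two pieces without creating a cycle, keeping the edge count equal to $n$; the case $w=0$ simply re-adds $(u,0)$ and changes nothing. Thus upon termination $\F^{*}$ is a spanning tree of $\widetilde{K_n}$, i.e.\ some $\F_{K_n}$, and its induced forest $F^{*}$ is a spanning rooted forest of $K_n$.

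Finally I would identify $\Op$ as an operational set. Every pair placed in $\Op$ has the form $(u,w)$ with $w\neq 0$ and is added to $\F^{*}$ as an edge, so $\Op$ consists of valid pairs of $\F^{*}$. For the reverse containment $\Z_{G,\F^{*}}\subseteq\Op$, observe that the edges of $F^{*}$ are of exactly two kinds: those surviving from $F_{\overline{G}}$, all of which lie in $E(\overline{G})$ and hence cannot belong to $\Z_{G,\F^{*}}$; and the newly added edges $(u,w)$ with $w\neq 0$, all of which were placed in $\Op$. Consequently any edge of $F^{*}$ lying in $E(G)$ (equivalently, not in $E(\overline{G})$) must be one of the added edges and therefore lies in $\Op$, giving $\Z_{G,\F^{*}}\subseteq\Op$. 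Setting $C := \Op\setminus\Z_{G,\F^{*}}$, which is a collection of valid pairs disjoint from $\Z_{G,\F^{*}}$, we obtain $\Op = C\cup\Z_{G,\F^{*}} = \Op_{G,\F^{*},C}$, so $(\F^{*},\Op)\in\A$ as required.
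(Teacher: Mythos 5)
Your proposal is correct and follows essentially the same route as the paper's own proof: the invariant $|R| = |\W_{\F^{*}}|+1$, the pigeonhole argument showing some root's component contains no remaining weight (hence step 1 succeeds and the new edge creates no cycle), induction to keep $\F^{*}$ a spanning tree of $\widetilde{K_n}$, and the observation that any edge of $F^{*}$ not in $E(\overline{G})$ must have been added in step 2 and so lies in $\Op$. Your write-up is in fact slightly more careful than the paper's at the end, where you explicitly exhibit $C=\Op\setminus\Z_{G,\F^{*}}$ to match the definition of an operational set.
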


\begin{proof}
We first show that at any stage, the set $R$ and graph $\F^{*}$ satisfy the following properties:
\begin{enumerate}
\item $\F^{*}$ is a spanning tree of $\widetilde{K_n}$, i.e. $F^{*}$ is a sapnning rooted forest of $K_n$.
\item $R$ is the sets of roots of forest $F^{*}$.
\end{enumerate}

We proceed by induction on the number of loops. Initially, $R$ is the set of all the roots in forest $F_{\overline{G}}$, and $\W_{\F^{*}}$ is a sequence of length $k-1=|R|-1$. Moreover, at each step 1, we remove an element in $\W_{\F^{*}}$ and an element in $R$. Thus, the length of sequence $\W_{\F^{*}}$ is always $|R|-1$. 

Now, suppose at some stage, we have that properties (1) and (2) hold and sequence $\W_{\F^{*}} = \{w'_1,\ldots,w'_{k_1-1} \}$, where $k_1 = |R|$. During step 1, since there are $k_1$ connected components in $F^{*}$, there exists at least one connected component that contains no elements in $\W_{\F^{*}}$. Consider the compoenent with the largest root $u$ that meets this condition. It is not hard to see that for any $1\leq i \leq k_1-1$, $w'_i$ is not $u$ nor a child of $u$. Consequently, step 1 works. 

For step 2, by the choice of vertex $u$, we have $w'_i$ and $u$ are not connected in $F^{*}$. Suppose $\F^{*}$ becomes cyclic after we delete edge $(u,0)$ and add edge $(u,w'_1)$ to this graph. This implies that there is a cycle containing edge $(u,w'_1)$. It is not possible since vertices $u$ and $w'_1$ would be connected in $F^{*}$ before we add edge $(u,w'_1)$.

The above arguments show that after step 1 and 2, $\F^{*}$ remains acyclic, and is a spanning tree of $\widetilde{K_n}$. Futhermore, after step 2, since $u$ is no longer a root, $R$ remains as the set of all roots in $F^{*}$. As a result, properties (1) and (2) always hold.

Finally, we need to show that $( v,v' )  \in \Op$, for every directed edge $( v,v' )  \notin E(\overline{G})$ and $( v,v' )  \in E(F^{*})$. Clearly, $\F^{*}$ is obtained from $\F_{\overline{G}}$ by a series of removing and adding edges in step 2. If edge $( v,v' )  \notin E(\overline{G})$, then $(v,v') \notin E(\F_{\overline{G}})$. Therefore, edge $(v,v')$ is added to graph $\F^{*}$ in some step 2, and $( v,v' )  \in \Op$. This implies that $(\F^{*},\Op) \in A$ as desired. 
\end{proof}

\begin{theorem}
The two maps $\phi$ and $\psi$ define a bijective correspondence between sets $\A$ and $\B$.
\end{theorem}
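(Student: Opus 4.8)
The plan is to show that $\phi$ and $\psi$ are mutually inverse. Since the two preceding propositions already establish that $\phi\colon\A\to\B$ and $\psi\colon\B\to\A$ are well-defined, it then suffices to prove the two identities $\psi\circ\phi=\mathrm{id}_{\A}$ and $\phi\circ\psi=\mathrm{id}_{\B}$, for these exhibit $\phi$ and $\psi$ as mutually inverse maps and hence as bijections. The two verifications are completely parallel, so I would carry out $\psi\circ\phi=\mathrm{id}_{\A}$ in full and obtain $\phi\circ\psi=\mathrm{id}_{\B}$ by the symmetric argument.

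For $\psi\circ\phi=\mathrm{id}_{\A}$, I start from a pair $\lb\F_{K_n},\Op\rb\in\A$, let $\phi$ return $\lb\F,\W\rb$ with $\W=( v^{*}_1,\ldots,v^{*}_{k-1} )$, and record that $v^{*}_j$ is the target deleted when $\phi$ removes the vertex $u^{*}_j$ at its $j$-th iteration. Because $\W$ is built by appending to the end while $\psi$ consumes it from the front, I would prove by induction on $j$ that the $j$-th iteration of $\psi$, reading $w=v^{*}_j$, selects exactly the vertex $u=u^{*}_j$ and restores the edge $(u^{*}_j,v^{*}_j)$ that $\phi$ had replaced by $(u^{*}_j,0)$. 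The inductive invariant to maintain is that after $j$ steps the root set $R$ of $\psi$ equals the set $S$ of $\phi$ after its first $j$ cuts, and that the partially rebuilt graph $\F^{*}$ agrees with $\F_{K_n}$ on all edges already reinstated; concretely, $\F^{*}$ after $j$ reattachments is $\F_{K_n}$ with only the cuts $j+1,\dots,k-1$ still applied. Granting the selection match, this reconstructs $\F_{K_n}$ exactly, and since $\psi$ adds $(u,w)$ to its operational set precisely when $w\neq 0$ --- which by the construction of $S$ happens exactly for the edges of $\Op$ --- the recovered operational set is $\Op$.

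The heart of the argument, and the step I expect to be the main obstacle, is proving that the two greedy selection rules choose the same vertex at every stage. On the $\phi$ side, a vertex $u\in S$ is eligible at a given iteration precisely when it has become a leaf of the pruned working tree, i.e. when every vertex attached below it has already been deleted. On the $\psi$ side, a root $u\in R$ is eligible precisely when no weight $w_i$ still present in $\W_{\F^{*}}$ equals $u$ or is a child of $u$ in the current $\F^{*}$. I would show these two conditions cut out the same set of vertices by a structural invariant: reattaching $u$ through $(u,w)$ in $\psi$ is the exact reverse of cutting $u$ in $\phi$, so at corresponding stages the still-uncut subtree hanging below $u$ in $\phi$ is identified with the subtree of $u$ already rebuilt in $\psi$, and the statement ``no remaining weight lands in that subtree'' becomes ``$u$ is currently a leaf among the cut vertices.'' Once the eligible sets are shown to coincide, both definitions select the largest element, so the choices agree; this is exactly where the common largest-vertex tie-break built into the two definitions is used.

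With the selection match established, both compositions collapse to the identity by the two parallel inductions, and the theorem follows.
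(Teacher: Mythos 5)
Your proposal is correct and follows essentially the same route as the paper: both arguments reduce the theorem to showing that the sequence of vertices deleted by $\phi$ coincides with the sequence of vertices selected by $\psi$, and both identify the key lemma as the equivalence, at each corresponding stage, between ``$u$ is a leaf of the pruned working tree'' and ``no remaining weight is $u$ or a child of $u$,'' after which the common largest-vertex tie-break forces the choices to agree by induction. The only structural difference is that you explicitly flag the need to check $\phi\circ\psi=\mathrm{id}_{\B}$ as well, whereas the paper verifies only $\psi\circ\phi=\mathrm{id}_{\A}$; your instinct is the more careful one, though the second composition requires its own (analogous) induction rather than a literally symmetric restatement.
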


\begin{proof}
We have shown that $\phi$ and $\psi$ are well-defined. The remaining task is to prove that $\phi$ is the inverse map of $\psi$. 

Given a pair $\left( \F_{K_n},\Op_{G,\F_{K_n},C} \right) \in \A $, we apply the map $\phi$ and obtain an output $(\F,\W  ) \in \B$. Suppose that during the map $\phi$, we record the largest vertex $u^{*}$ in every step 3 into a sequence $U$ in order. It is easy to see that $|U| = |\W| = |S|-1$, where $S$ is the original set before the WHILE loop in map $\phi$. Let $|S| = k$, and we set $U = \{u_1,\ldots,u_{k-1}   \}$ and $\W = \{w_1,\ldots,w_{k-1}   \}$. Thus, for any $1\leq j \leq k-1$, $( u_j,w_j ) $ is the directed edge removed from $\F_{\overline{G}}$ in step 3 in the $j$-th WHILE loop. 

Now, let us apply the map $\psi$ on pair $(\F,W) \in \B$, and denote the output pair by $\left( \F '_{K_n},\Op_{G,\F '_{K_n},C_1} \right)\in \A$. Therefore, initially, $R = S$ is the set of roots of forest $F$. Our goal is to prove that 
\begin{equation}\label{e1}
\F_{K_n} = \F '_{K_n}\text{ and }\Op_{G,\F_{K_n},C_1} = \Op_{G,\F '_{K_n},C_1}.
\end{equation}
We record the vertex $u$ we picked in every step 1 in the map $\psi$ and get a sequence $U' = \{u'_1,\ldots,u'_{k-1}\}$ in order. Clearly, if $U$ and $U'$ are the same sequence, Eq.\eqref{e1} holds since every move in step 2 in $\psi$ will be the reverse move in step 4 in $\phi$. 

Before we show that $U = U'$, we first prove the following property:

\begin{enumerate}
\item In the $i$-th WHILE loop of the map $\phi$, where $1\leq i \leq k-1$, consider the graph $\F_{K_n}$ after step 2. Then for any $u$ in that current set $S$, it is not a leaf in $\F_{K_n}$ iff there exists some $w_{i_1}$, where $i \leq i_1 \leq k-1$, such that $w_{i_1}$ is $u$ or a child of $u$.
\end{enumerate}

If $u$ is not a leaf in $\F_{K_n}$, then there must be a vertex $u'$ in current set $S$ that is child of $u$. Consider the vertex $w'$ which edge $(u',w')$ is in $E(\F_{K_n})$. Consequently, $w' \in \{w_i,\ldots,w_{k-1} \}$ is vertex $u$ or child or $u$. By some easy arguments, one can see that the reverse statement is true, and thus prove property (1).

We now show $U=U'$ by induction on the index $i$, where $1 \leq i \leq k-1$. When $i = 1$, clearly, from (1), we know that $u'_1$ is a leaf in $\F_{K_n}$. By the choice of $u_1$, we have $u'_1 \leq u_1$. On the other hand, since $u'_1$ is the largest element in $S$ that no element in $\W$ is $u'_1$ or child of $u'_1$, we have $u_1 \leq u'_1$. As a result, $u_1 = u'_1$. 

Secondly, suppose for $i$ from 1 to $r-1$, where $r \leq k-1$, we have $u_i = u'_i$. That is to say, the set $S$ and $R$ in the $r$-th WHILE loop of map $\phi$ and $\psi$ are the same. When $i = r$, from (1) and the choice of $u'_r$, we have that both $u_r \in S$ and $u'_r \in R = S$ are the largest vertex $z$ such that no element $w \in \{ w_r,\ldots,w_{k-1} \}$ is $z$ or child of $z$. Consequently, $u_r = u'_r$.  

By induction, we can prove that $U$ and $U'$ are the same sequence. Therefore, Eq.\eqref{e1}  holds and $\psi$ is the inverse map of $\phi$. Finally, this shows us that the two maps $\phi$ and $\psi$ define a coorespondence relation between sets $\A$ and $\B$. 
\end{proof}

In particular, consider the case that $G = K_n$. Since $\overline{G}$ is empty, we have that every valid pair $( u,v ) $ in $\F_{K_n}$ is not in $\overline{G}$. Therefore, for every spanning tree $\F_{K_n}$ in $\widetilde{K_n}$, there is only one possible operational set $\Op_{K_n,\F_{K_n},C} = Z_{K_n,\F_{K_n}}$. In addition, there is only one spanning tree $\F_{\overline{G}}$ which is the graph with every vertex connected to root 0. Consequently, for every pair $( \F_{\overline{G}},\W_{\F_{\overline{G}}} )  \in B$, we have that $|\W_{\F_{\overline{G}}}| = n-1$. That is to say, every element in $\B$ is associated to a sequence of length $n-1$. One can easily see that the map $\phi$ now is a prufer coding for spanning trees in $K_{n+1}$ and therefore, prufer coding is a special case for this bijection.
\section{A New Proof of The Reciprocity Theorem}
In this section, we show how to use this bijection to prove the reciprocity theorem. 

\begin{theorem}
Let $G$ be a graph on the set of vertices $\{1,\ldots, n   \}$. Then
\begin{equation}\label{e2}
f_G{( x;x_1,\ldots,x_n ) } = ( -1 ) ^{n-1} \cdotp f_{\overline{G}}( {-x-x_1-\cdots-x_n;x_1,\ldots,x_n} ). 
\end{equation}
\end{theorem}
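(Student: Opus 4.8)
The plan is to equip both $\A$ and $\B$ with weight functions whose total sums are, respectively, the two sides of Eq.\eqref{e2}, and then to feed the whole thing into the bijection $\phi$ of the previous theorem. Write $s = x + x_1 + \cdots + x_n$. To a pair $(\F_{\overline{G}}, \W) \in \B$, where the forest $F_{\overline{G}}$ has $k$ components, I assign the weight
\[
\mathrm{wt}_{\B}(\F_{\overline{G}}, \W) = (-1)^{n+k} \Big(\prod_{i=1}^{k-1} x_{w_i}\Big) \prod_{v \in V} x_v^{\rho_{\F_{\overline{G}}}(v)-1},
\]
and to a pair $(\F_{K_n}, \Op) \in \A$, where the forest $F_{K_n}$ has $j$ components, the weight
\[
\mathrm{wt}_{\A}(\F_{K_n}, \Op) = (-1)^{n+j+|\Op|}\, m(\F_{K_n}).
\]
If I can show that $\sum_{\B}\mathrm{wt}_{\B}$ is the right-hand side, that $\sum_{\A}\mathrm{wt}_{\A}$ is the left-hand side, and that $\phi$ carries $\mathrm{wt}_{\A}$ to $\mathrm{wt}_{\B}$, then Eq.\eqref{e2} follows immediately.

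For the right-hand side I would start from $f_{\overline{G}}(y; x_1,\ldots,x_n) = \sum_{\F_{\overline{G}}} y^{k-1}\prod_v x_v^{\rho_{\F_{\overline{G}}}(v)-1}$, substitute $y = -s$, and expand $(-s)^{k-1} = (-1)^{k-1}(x+x_1+\cdots+x_n)^{k-1}$ by the multinomial theorem, indexing the resulting monomials by sequences $(w_1,\ldots,w_{k-1}) \in \{0,1,\ldots,n\}^{k-1}$ with the convention $x_0 = x$. These sequences are precisely the weight sequences $\W_{\F_{\overline{G}}}$, so after multiplying by the prefactor $(-1)^{n-1}$ the sign becomes $(-1)^{n+k}$ and one reads off $(-1)^{n-1}f_{\overline{G}}(-s; x_1,\ldots,x_n) = \sum_{(\F_{\overline{G}},\W)\in\B}\mathrm{wt}_{\B}(\F_{\overline{G}},\W)$.

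For the left-hand side I would fix $\F_{K_n}$ and sum over operational sets. Recall $\Op = \C \cup \Z_{G,\F_{K_n}}$, where $\Z_{G,\F_{K_n}}$ collects the forest edges lying in $G$ and $\C$ ranges over all subsets of the forest edges lying in $\overline{G}$. Writing $e$ for the number of $\overline{G}$-edges in $F_{K_n}$, the factor $(-1)^{|\Op|} = (-1)^{|\Z_{G,\F_{K_n}}|}(-1)^{|\C|}$ gives $\sum_{\C}(-1)^{|\C|} = (1-1)^{e}$, which vanishes unless $e = 0$, that is, unless every edge of $F_{K_n}$ lies in $G$; for such a forest $|\Z_{G,\F_{K_n}}| = n-j$ and the residual sign is $(-1)^{n+j+(n-j)} = 1$. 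Hence $\sum_{\A}\mathrm{wt}_{\A} = \sum_{F_{K_n}\text{ a spanning rooted forest of }G} m(\F_{K_n}) = f_G(x; x_1,\ldots,x_n)$, the inclusion--exclusion collapsing exactly onto the forests of $G$.

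The crux, and the step I expect to be the main obstacle, is verifying that $\phi$ is weight-preserving. Here I would track the algorithm: each operational edge $(u,v)$ is eventually deleted and replaced by $(u,0)$, so the tails of the $|\Op|$ operational edges together with the $j$ roots of $F_{K_n}$ constitute the starting set $S$, and since $\phi$ cuts all but one element of $S$ the image forest $F_{\overline{G}}$ has exactly $k = j + |\Op|$ components, matching the two signs $(-1)^{n+j+|\Op|}$ and $(-1)^{n+k}$. Replacing $(u,v)$ by $(u,0)$ leaves the degree of $u$ unchanged and lowers that of $v$ by one, so $\prod_v x_v^{\rho_{\F_{\overline{G}}}(v)-1} = \big(\prod_v x_v^{\rho_{\F_{K_n}}(v)-1}\big)/\prod_{(u,v)\in\Op}x_v$; meanwhile $\W$ records precisely the heads $v$ of the operational edges together with $j-1$ zeros, so $\prod_{i=1}^{k-1}x_{w_i} = x^{j-1}\prod_{(u,v)\in\Op}x_v$. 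Since $m(\F_{K_n}) = x^{j-1}\prod_v x_v^{\rho_{\F_{K_n}}(v)-1}$, multiplying these and cancelling the common factors $x^{j-1}$ and $\prod_{(u,v)\in\Op}x_v$ yields $\mathrm{wt}_{\B}(\phi(\F_{K_n},\Op)) = (-1)^{n+k}m(\F_{K_n}) = \mathrm{wt}_{\A}(\F_{K_n},\Op)$. The delicate points are confirming that every operational edge is cut while exactly one root survives (so that the count $k = j + |\Op|$ and the tally of zeros in $\W$ are both correct) and keeping the degree bookkeeping consistent when a single vertex serves simultaneously as a head and as a tail of operational edges.
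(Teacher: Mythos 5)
Your proposal is correct and follows essentially the same route as the paper: both sides are expanded over the pairs in $\B$ and $\A$ respectively, the sums are matched through the bijection $\phi$, and the contributions of trees $\F_{K_n}$ containing an edge of $\overline{G}$ are killed by summing signs over the choices of $\C$ (your $(1-1)^e$ is the paper's pairing of operational sets via \eqref{e7}). Your packaging as a weight-preserving bijection between signed sets is tidier in that it absorbs the paper's separate homogeneity step \eqref{e3} into the sign $(-1)^{n+k}$ and spells out the bookkeeping ($k=j+|\Op|$, and $\W$ consisting of the operational heads plus $j-1$ zeros) that the paper compresses into Eq.~\eqref{e6}, but the underlying argument is the same.
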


\begin{proof}
First of all, we show that 
\begin{equation}\label{e3}
( -1 ) ^{n-1} \cdotp f_{G}( {x;x_1,\ldots,x_n} )  = f_{G}( {-x;-x_1,\ldots,-x_n} ). 
\end{equation}
If we can show that the degree of every monomial in $f_{G}( {x;x_1,\ldots,x_n} ) $ is $n-1$, then Eq.\eqref{e3}  will be true. Note that each monomial in $f_{G}( {x;x_1,\ldots,x_n} ) $ corresponds to some spanning tree $\F_{\widetilde{K_n}}$ of $\widetilde{K_n}$, and we have
\begin{align}\displaystyle
\deg\left(m\left(\F_{\widetilde{K_n}}\right)\right) = \sum_{v\in \{0,\ldots,n \}} \big{( }\deg( v )  - 1 \big{ ) } &= \displaystyle\sum_{v\in \{0,\ldots,n \}} \deg( v )  - ( n+1 ) \\
 &= 2|E| - ( n+1 )  = n-1.
\end{align}
This implies that Eq.\eqref{e3} is true. 

Now, we show that 
\begin{equation}\label{e4}
f_G{( x;x_1,\ldots,x_n ) } = f_{\overline{G}}( {x+x_1+\cdots+x_n;-x_1,\ldots,-x_n} ). 
\end{equation} 
Consider some spanning tree $\F_{K_n}$ of $\widetilde{K_n}$ associated to a monomial $x^{d}x_1^{d_1}\cdots x_n^{d_n}$ in polynomial $f_G$ and an operational set $\Op_{G,\F_{K_n},C}$ for $\F_{K_n}$. Let us apply the map $\phi$ on $\left( \F_{K_n}, \Op_{G,\F_{K_n},C} \right)$. Denote the output pair by $\left(\F_{\overline{G}},\W_{\F_{\overline{G}}}\right) \in \B$, where sequence $\W_{\F_{\overline{G}}} = ( w_1,\ldots,w_{k-1} ) $, and $k$ is the number of connected components in $F_{\overline{G}}$. Moreover, the contribution of graph $\F_{\overline{G}}$ in the polynomial $f_{\overline{G}}$ is
\begin{equation}\label{e5}
( x+x_1+\cdots+x_n ) ^{k-1}( -x_1 ) ^{\deg( v_1 ) -1}\cdots( -x_n ) ^{\deg( v_n ) -1},
\end{equation}
where $\deg( v_i ) $ is the degree of vertex $i \neq 0$ in $\F_{\overline{G}}$. We associate the pair $\left( \F_{\overline{G}},\W_{\F_{\overline{G}}} \right) $ to the monomial 
$$x_{w_1}\cdots x_{w_{k-1}} ( -x_1 ) ^{\deg( v_1 ) -1}\cdots( -x_n ) ^{\deg( v_n ) -1}$$
in \eqref{e5}, where $x_0 = x$ and $x_{w_j}$ is the variable corresponding to vertex $w_j$, for $1\leq j \leq k-1$. Clearly, $x_{w_1}\cdots x_{w_{k-1}}$ is a monomial in $( x+x_1+\cdots+x_n ) ^{k-1}$. By the choice of $\W_{\F_{\overline{G}}}$ shown in section 3, we have that the set $\B$ and set of all monomials in $f_{\overline{G}}( {x+x_1+\cdots+x_n;-x_1,\ldots,-x_n} )$ have a bijective coorespondence.

It is easy to show that the monomial for the pair $\left( \F_{K_n},\Op_{G,\F_{K_n},C} \right)$ is the monomial associated to the pair $\left( \F_{\overline{G}},\W_{\F_{\overline{G}}} \right) $ with several sign changes, where the number of sign changes is $\displaystyle \sum_{i = 1}^{n}(\deg( v_i )-1)$. That is to say, we have 
\begin{equation}\label{e6}
x^{d}x_1^{d_1}\cdots x_n^{d_n} = (-1)^{l} \cdotp x_{w_1}\cdots x_{w_{k-1}} x_1^{\deg( v_1 ) -1}\cdots x_n^{\deg( v_n ) -1},
\end{equation}
where $l = \displaystyle \sum_{i = 1}^{n}(\deg( v_i )-1) = n- \deg(v_0)$.

Now, suppose that $\F_{K_n} \in \T(\widetilde{G})$. Since every valid pair in $\F_{K_n}$ is not in graph $\overline{G}$, the only operational set for $\F_{K_n}$ is $\Z_{G,\F_{K_n}}$. In addition, the output spanning tree $\F_{\overline{G}}$ is the extended graph of empty graph. Therefore, the only pair $\left( \F_{K_n},\Z_{G,\F_{K_n}} \right)\in \A $ for $\F_{K_n}$ is mapped to a monomial in $( x+x_1+\cdots+x_n ) ^n$. This implies that the coefficient of the monomial associated to $\F_{K_n}$ is $1$ in $f_{\overline{G}}( {x+x_1+\cdots+x_n;-x_1,\ldots,-x_n} ) $.

Secondly, if $\F_{K_n} \notin \T(\widetilde{G})$, then there is an edge $( u,v )  \in E(F_{K_n})$ such that $( u,v )  \in E(\overline{G})$. For every operational set $\Op_{G,\F_{K_n},C} $ for $\F_{K_n}$, we consider the two operational sets:
\begin{equation}\label{e7}
 \Op_1 = \Op_{G,\F_{K_n},C} \cup \{ ( u,v )  \}, \text{ and }\Op_2 = \Op_1 \backslash \{ ( u,v )  \} 
\end{equation}
Clearly, $\Op_1$ and $\Op_2$ are both operational sets for $\F_{K_n}$. Denote the output pair for $( \F_{K_n}, \Op_1 ) $ as $( \F_1,\W_1 ) $ and the output pair for $( \F_{K_n}, \Op_2 ) $ as $( \F_2,\W_2 ) $ in the map $\phi$. From Eq.\eqref{e6}, one can see that the monomials associated to the two pairs $( \F_1,\W_1 ) $ and $(\F_2,\W_2)$ are the same. Moreover, the degrees of root $0$ in $\F_1$ and $\F_2$ are differ by 1. Consequently, by \eqref{e6}, the summation of the coefficients of the monomial associated to $( \F_1,\W_1 ) $ and $(\F_2,\W_2)$ is 0. Finally, because we can pair up all the operational sets for $\F_{K_n}$ by \eqref{e7}, the contribution of the monomial for $\F_{K_n}$ in $f_{\overline{G}}( {x+x_1+\cdots+x_n;-x_1,\ldots,-x_n} ) $ is 0. 

From the above argument, we conclude that the only monomials left in $f_{\overline{G}} $ after cancellation of coefficients are the monomials in $f_G{( x;x_1,\ldots,x_n ) }$. Moreover, each monomial in $f_{G}$ has coefficient 1 in  $f_{\overline{G}}( {x+x_1+\cdots+x_n;-x_1,\ldots,-x_n} ) $. As a result, we have that $f_G{( x;x_1,\ldots,x_n ) } = f_{\overline{G}}( {x+x_1+\cdots+x_n;-x_1,\ldots,-x_n} ) $, and Eq.\eqref{e2} holds as desired.
\end{proof}

\bigskip

\end{document}